\newtheorem{lemma}{Lemma}[section]
\newtheorem{theorem}[lemma]{Theorem}
\newtheorem{cor}[lemma]{Corollary}
\newtheorem{prop}[lemma]{Proposition}
\newtheorem{defn}[lemma]{Definition}
\newtheorem{question}[lemma]{Question}
\newcommand{\Z}{\mathbb{Z}}
\newcommand{\C}{\mathbb{C}}
\newcommand{\into}{\hookrightarrow}
\title{Configurations of spheres in $\#^n \C P^2$}
\author{William Ballinger}
\begin{document}

\maketitle

\begin{abstract}
By taking the complements of embeddings of sphere plumbings in connected sums of $\C P^2$, we construct examples of simply
connected four-manifolds with lens space boundary and $b_2 = 1$. The resulting boundaries include many lens spaces that
cannot come from integer surgery on any knot in $S^3$, so the corresponding four-manifolds cannot be built by attaching a
single two-handle to $B^4$. Using similar constructions, we give an example of an embedded sphere in $\#^4 \C P^2$ with self-intersection number $20$, and conjecture that this is the maximum possible.  
\end{abstract}

\section{Introduction}

Every lens space $L(p,q)$ bounds a positive definite, simply connected four-manifold coming from a plumbing of spheres according to the continued fraction of $p/q$. In general, this plumbing can have quite large $b_2$, but in some cases one can make the bounding manifold much smaller. For example, if $L(p,q) = S^3_p(K)$ for some knot $K$, then the trace of this surgery will have $b_2 = 1$, while the corresponding plumbing can be larger. In general, one can ask

\begin{question}\label{motivating}
Among all positive definite simply-connected four-manifolds $X$ with $\partial X = L(p,q)$, how small can $b_2(X)$ be?
\end{question}

The answer to this question is not known for most $(p,q)$, and almost all obstructions to reducing $b_2(X)$ come from applying Donaldson's diagonalization theorem to the closed, negative definite manifold $\overline{X} \cup_{L(p,q)} Y(p,q)$, where $Y(p,q)$ is the negative definite plumbing with boundary $L(p,q)$. 

Using this technique, Lisca \cite{lisca2007lens} was able to classify those $L(p,q)$ that bound rational homology balls, or equivalently those for which the answer to Question~\ref{motivating} is $0$. A key simplification available in this case, but not for larger $b_2$, is that $L(p,q)$ bounds a rational homology ball if and only if $\overline{L(p,q)} = L(p,-q)$ does, which gives two independent lattice embedding obstructions for each lens space.

As soon as $n > 0$, however, a complete classification of lens spaces bounding positive definite four-manifolds with $b_2 = n$ is not available. For example, if there were a sphere in $\C P^2 \# \C P^2$ in the homology class $3e_1 + 2e_2$, then its complement would be positive definite, have $b_2 = 1$, and have boundary the lens space $L(13,1)$. Such a sphere seems unlikely to exist, but existing four-manifold invariants are unable to say that it does not. 

If one further restricts the four-manifold $X$, slightly more information becomes available. Using the obstruction from Donaldson's diagonaliztion theorem together with the $d$ invariant from Heegaard Floer homology, Greene \cite{greene2013lens} classified those $L(p,q)$ that are homeomorphic to $S^3_p(K)$ for some knot $K$ and $p > 0$. Equivalently, this determines those lens spaces bounding positive definite four manifolds with $b_2 = 1$ that can be built using only $2$-handles. Greene also explicity asks whether these are all of the lens spaces bounding simply connected four-manifolds with $b_2 = 1$. 

The aim of this paper is to reverse these obstructions. The closed, negative-definite four manifold $\overline{X} \cup_{L(p,q)} Y(p,q)$ appearing above is homotopy equivalent to $\#^N \overline{\C P^2}$ for some $N$, but these are not obviously diffeomorphic in general. However, in particular examples these are very often diffeomorphic (these could in fact be always diffeomorphic, since it is not currently known if there are any exotic homotopy $\#^N \overline{\C P^2}$s). If $\overline{X} \cup_{L(p,q)} Y(p,q) \cong \#^N \overline{\C P^2}$, then this diffeomorphism gives an embedding of $Y(p,q)$ in $\#^N \overline{\C P^2}$, and $X$ can be recovered as the mirror if its complement. 

In particular, we can find new examples of $X$ with small $b_2$ by first finding embeddings of the sphere plumbings $\overline{Y(p,q)}$ in the positive definite four manifold $\#^N \C P^2$, for $N$ not much larger than $b_2(Y(p,q))$. Then the complement $X = \#^n \C P^2 - \overline{Y(p,q)}$ will be a positive definite four manifold with small $b_2(X)$ and lens space boundary. The two main results obtained are:

\begin{theorem}
Infinitely many lens spaces, in particular $$L(16 n^{2} k - 16 n k^{2} - 12 n^{2} + 4 k^{2} + 8 n - 2, 16 n k - 16 k^{2} - 12 n + 4 k + 5)$$ when $1 < k < n$ and $(2k-1,2n-1) = 1$, bound simply connected, positive definite four-manifolds with $b_2 = 1$ that are not the trace of any integral surgery. 
\end{theorem}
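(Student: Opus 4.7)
The plan is to implement the reversal strategy outlined in the introduction: for each valid $(n,k)$, construct an explicit embedding of the positive-definite sphere plumbing $\overline{Y(p,q)}$ into $\#^N \C P^2$ with $N = b_2(\overline{Y(p,q)}) + 1$, take the complement $X$ as the desired positive definite four-manifold with $b_2 = 1$ and boundary $L(p,q)$, and then use Greene's classification \cite{greene2013lens} to certify that this $L(p,q)$ is not the trace of any integer surgery.

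The first step is to compute the negative continued fraction expansion of $p/q$ for the family. The quadratic dependence on $n$ and $k$ together with the appearance of factors resembling $(2k-1)(2n-1)$ strongly suggests a short, uniformly structured continued fraction whose blocks depend linearly on $n$ and $k$. This reveals the linear plumbing $Y(p,q)$ with weights $[-a_1, \dots, -a_m]$, the mirror $\overline{Y(p,q)}$ with weights $[a_1, \dots, a_m]$, the plumbing's second Betti number $m$, and hence the target rank $N = m+1$.

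The core of the argument is producing the embedding into $\#^N \C P^2$. I would represent each plumbing vertex by a smoothly embedded sphere $S_i \subset \#^N \C P^2$ in an explicit homology class $\sum_j c_{ij} e_j$ in the hyperplane generators, arranged so that $S_i \cdot S_i = a_i$, adjacent spheres meet once transversally, and non-adjacent spheres are disjoint. Existence of smoothly embedded spheres in low-complexity classes is guaranteed by taking proper transforms of lines, conics, and higher rational curves in a single $\C P^2$ summand after blowing up; disjointness is arranged by positioning the blow-up points. The main obstacle is solving the quadratic intersection constraints consistently over the integers for every $(n,k)$ in the family, which requires guessing classes that thread both parameters through a single uniform pattern. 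I would expect the classes to take one of a few standard shapes familiar from Lisca's and Greene's lens-space arguments, with verification of the intersection numbers ultimately reducing to polynomial identities in $n$ and $k$.

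With the embedding established, the complement $X$ has $b_2(X) = N - m = 1$ and is positive definite by the orthogonal splitting of the intersection form, and has boundary $L(p,q)$ by construction. For simple connectivity, the meridian of any $S_i$ bounds a normal disk in the complement; if some meridian generates $\pi_1(L(p,q)) = \Z/p$, then van Kampen applied to $\#^N \C P^2 = \overline{Y(p,q)} \cup X$ forces $\pi_1(X) = 1$, and I would verify this numerical condition from the explicit classes chosen. Finally, to rule out integer surgeries, I would apply Greene's theorem: if $L(p,q) = S^3_p(K)$ for a knot $K$, then the vector $(a_1, \dots, a_m)$ would admit a changemaker lattice embedding into $\Z^m$, and the coprimality assumption $(2k-1, 2n-1) = 1$ together with the ranges $1 < k < n$ should obstruct the existence of any such changemaker. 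Carrying out this combinatorial non-existence uniformly in $n$ and $k$ is the final step and reduces to inspecting candidate changemaker vectors of length $m$.
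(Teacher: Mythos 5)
There is a genuine gap: the heart of the theorem is the existence of the embedding of the linear plumbing into $\#^N \C P^2$, and your proposal does not construct it. You acknowledge that "the main obstacle is solving the quadratic intersection constraints consistently over the integers for every $(n,k)$," and then leave that step as a guess about what the homology classes "should" look like. Moreover, the suggested mechanism --- proper transforms of lines and conics after blowing up --- does not directly apply, since $\#^N \C P^2$ carries no complex structure and the blow-ups here are connected sums with $\C P^2$, not $\overline{\C P^2}$; the configuration needed includes, e.g., a sphere of square $n$, and the paper produces it by purely smooth cut-and-paste. The paper's route is to first build a \emph{circular} configuration of $n+2$ spheres in $\#^n \C P^2$ (Theorem~\ref{rings}, obtained from three lines in $\C P^2$ by repeatedly resolving intersections, organized by a Farey sequence of slopes, here $0/1 \prec 1/1 \prec 2/1 \prec \cdots \prec n/1 \prec 1/0$), and then to delete one sphere and smooth two intersection points; this yields the linear chain with weights $2,\dots,2,6,2,\dots,2,5,n$ already embedded, with no Diophantine guessing required. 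Your simple-connectivity sketch also contains an error: the meridian of $S_i$ does not bound a normal disk \emph{in the complement} (the normal disk meets $S_i$); meridians only normally generate $\pi_1$ of the complement, and in the paper simple connectivity is exactly what the hypothesis $(2k-1,2n-1)=1$ buys, via the coprimality condition in the proposition describing the smoothed configuration.

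The second gap is in ruling out surgery traces. You propose to invoke Greene's classification and assert that the coprimality condition "should obstruct" a changemaker embedding of the linear lattice; but coprimality plays no role in that obstruction (it is used for $\pi_1$), and showing that the lattice of $[2,\dots,2,6,2,\dots,2,5,n]$ admits no changemaker-complement embedding for all admissible $(n,k)$ is a substantial combinatorial analysis in the style of Greene's paper that you do not carry out. The paper sidesteps this: because it has an explicit embedding, it computes the complementary vector $\sigma = 4(n-k)(e_1+\cdots+e_{k-1}) + (2n-4k+1)e_k - (4k-2)(e_{k+1}+\cdots+e_{n-1}) - (2k-1)e_n$, observes that all entries except possibly one have absolute value at least $3$, so $\sigma$ is never a changemaker, and then applies a mild generalization of Greene's Theorem 3.3 from \cite{greene2015space} to conclude that this particular complement is not a surgery trace. (The stronger fact that these lens spaces are not positive integer surgeries at all is only a remark in the paper, again checked through lattice embeddings rather than through the condition you cite.) So while your overall strategy is the one announced in the introduction, both decisive steps --- the explicit embedding and the non-changemaker certification --- are missing or misattributed.
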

This family should be taken as just a few of the lens spaces to which this theorem applies, since the technique used to produce these examples is much more general.

\begin{theorem}
In $\#^k \C P^2$, there is a sphere of self-intersection $5k-1$, and a sphere of self-intersection $5k$ if $k$ is divisible by $4$. Therefore, the lens space $L(5n-1,1)$ bounds a simply-connected (as long as $n \ge 3$) positive-definite four manifold with $b_2 = 4n-1$, and $L(20n,1)$ bounds a simply-connected positive-definite four manifold with $b_2 = n-1$. 
\end{theorem}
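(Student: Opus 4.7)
My plan is to construct the embedded spheres explicitly by induction and then deduce the lens-space bounds by taking complements of tubular neighborhoods.

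The natural base case $k=1$ is a smooth conic $C \subset \C P^2$, an embedded sphere of self-intersection $4 = 5\cdot 1 - 1$. For the inductive step, I would tube the existing sphere $S \subset \#^k \C P^2$ together with an auxiliary embedded sphere $T \subset \#^j \C P^2$ of self-intersection $5j$ along a single transverse intersection point, producing a new embedded sphere of self-intersection $[S]^2 + 2\,[S]\cdot[T] + [T]^2 = (5k-1) + 5j$ inside $\#^{k+j} \C P^2$ when $[S]\cdot[T] = 0$. The key auxiliary configuration is a sphere of self-intersection $5$ in $\#^2 \C P^2$ in the primitive class $(2,1)$; iterating the $+10$ jump this yields handles odd $k$ from the conic base case, and even $k$ from a suitable starting configuration. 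The case of self-intersection $5k$ with $4 \mid k$ presumably starts from the distinguished sphere of self-intersection $20$ in $\#^4 \C P^2$ promised in the abstract, then iterates the same $+5$ move.

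Once the sphere $S$ of self-intersection $s$ is in hand, the lens-space corollaries follow by removing a tubular neighborhood $\nu(S)$, which is the disk bundle over $S^2$ of Euler number $s$; its boundary is $L(s,1)$. A Mayer--Vietoris computation over $\Q$ gives $b_2(X) = k - 1$ for the complement $X = \#^k \C P^2 \setminus \nu(S)$, and positive definiteness of $X$ follows by restricting the positive form on $\#^k \C P^2$ to the orthogonal complement of $[S]$. Finally $X$ is simply connected precisely when the class $[S] = \sum a_i e_i$ is primitive in $H_2(\#^k \C P^2; \Z)$, i.e.\ $\gcd(a_i) = 1$; arranging this is where the hypothesis $n \ge 3$ on the lens-space parameter enters, since the obvious solutions to $\sum a_i^2 = 5k - 1$ for very small $k$ fail to be primitive.

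The main obstacle is the geometric realization in the inductive step: a transverse algebraic intersection of $S$ and $T$ must actually be arranged inside $\#^{k+j} \C P^2$ so that the tubing arc is embedded and unknotted, keeping the resulting connected surface of genus zero. This is not an algebraic count but a genuine smooth realization problem, and correctly positioning $T$ relative to $S$—without inadvertently introducing extra intersections that force handles—is where most of the geometric work will live. A secondary subtlety is tracking primitivity of the homology class through the induction, which is presumably the reason for the small-$n$ exclusion in the lens-space statement.
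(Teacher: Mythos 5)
Your reduction to the lens-space statements (complement of a tubular neighborhood, $b_2 = k-1$, positive definiteness from the rational orthogonal complement of $[S]$) is the intended step, but the construction of the spheres themselves --- the actual content of the theorem --- does not work as proposed. In your tubing step the squares simply add, so to pass from a square-$(5k-1)$ sphere in $\#^k \C P^2$ to a square-$(5(k+j)-1)$ sphere in $\#^{k+j}\C P^2$ you need an auxiliary sphere of square exactly $5j$ in $\#^j\C P^2$. Your proposed auxiliary, the square-$5$ sphere in class $(2,1)$ in $\#^2\C P^2$, has $j=2$ but square $5$, not $10$: tubing with it produces square $5k+4$ in $\#^{k+2}\C P^2$, falling short of $5(k+2)-1$ by $5$, and there is no ``$+10$ jump'' unless you produce a square-$10$ sphere in $\#^2\C P^2$, which is not known to exist (the paper conjectures $9$ is the maximum there). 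The only sphere of square $5j$ in $\#^j\C P^2$ available is the square-$20$ sphere in $\#^4\C P^2$, whose existence you assume from the abstract rather than prove; that existence is precisely the hard part of the theorem, and the paper proves it by a twist-knot argument ($K_m$ and $K_{m-1}$ cobound an annulus in a twice-punctured $\C P^2$ in class $1$ or $3$ times the generator according to parity, so $K_4$ and the unknot cobound an annulus of square $1+9+1+9=20$ in $\#^4\C P^2$, capped off with a slice disk for $K_4$), followed by connected sums of copies of this pair to get $5k$ for all $4\mid k$. Even granting the $20$-sphere, your induction from the conic reaches only $k\equiv 1\pmod 4$ in the $5k-1$ family, and no base cases are supplied for the other residues. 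The paper's route is entirely different and handles all $k$ uniformly: the ring configuration of $k+2$ spheres in $\#^k\C P^2$ obtained by repeatedly blowing up three lines has total square $3k$; deleting a $+1$-sphere, reorienting so all intersections are positive, and smoothing the $k$ remaining intersection points of the resulting chain gives one sphere of square $(3k-1)+2k = 5k-1$.

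Two smaller points. First, your computation $b_2(X)=k-1$ is correct and in fact shows that the two $b_2$ values in the statement as printed are interchanged (the square-$(5n-1)$ sphere gives $b_2=n-1$ for $L(5n-1,1)$, and the square-$20n$ sphere in $\#^{4n}\C P^2$ gives $b_2=4n-1$ for $L(20n,1)$), up to the usual orientation convention on the boundary lens space. Second, ``simply connected precisely when $[S]$ is primitive'' is too strong: non-primitivity does obstruct (and indeed for $n=1,2$ the classes are forced to be $2h$ and $3e_1$, which is the reason for the $n\ge 3$ caveat), but primitivity only gives $H_1(X)=0$; since $\pi_1(X)$ is merely normally generated by a meridian, simple connectivity requires an additional geometric argument beyond the homology computation.
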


\section{The central configuration}

Beginning with three complex lines in $\C P^2$, one can repeatedly resolve intersections between them via connected sum with additional copies of $\C P^2$, resulting in embeddings in $\#^n \C P^2$ of ring-shaped plumbings of $n+2$ spheres. The self-intersection numbers of the spheres in these configurations, and other information about their homology classes, can be efficiently represented in terms of paths in the Farey graph of rational numbers. 

In what follows, a slope will be either a nonnegative rational number or $1/0$, and, given two slopes $p/q$ and $r/s$, the distance between them is $\Delta(p/q,r/s) = |ps-qr|$. If $p/q < r/s$ and $\Delta(p/q,r/s) = 1$, write $p/q \prec r/s$. 

\begin{theorem}\label{rings}

Given a sequence of slopes $$0/1 = p_0/q_0 \prec p_1/q_1 \prec \cdots \prec p_n/q_n \prec p_{n+1}/q_{n+1} = 1/0, $$ there is a configuration of spheres $\Sigma_0,\dots,\Sigma_{n+1}$ in $\#^n \C P^2$ such that $\Sigma_i \cdot \Sigma_i = \Delta(p_{i-1}/q_{i-1}, p_{i+1}/q_{i+1})$ with indices taken modulo $n+2$, and with $\Sigma_i$ intersecting $\Sigma_{i+1}$ once negatively, $\Sigma_0$ intersecting $\Sigma_{n+1}$ once positively, and all other pairs of spheres disjoint.

The complement of a tubular neighborhood of $\cup_i \Sigma_i$ is diffeomorphic to $T^2 \times D^2$, and the homology classes $\gamma_i$ represented by meridians of $\Sigma_i$ in this complement satisfy $$\gamma_i = q_i \gamma_0 + p_i \gamma_{n+1}$$.

\end{theorem}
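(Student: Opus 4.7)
The plan is to induct on $n$, the number of intermediate slopes.

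For the base case $n = 1$, I would take three complex projective lines $L_0, L_1, L_2 \subset \C P^2$ in general position and reverse the orientation of $L_1$; the resulting spheres $\Sigma_0, \Sigma_1, \Sigma_2$ have homology classes $h, -h, h$, all self-intersections $1$, signed intersections $(-1, -1, +1)$, complement $(\C^*)^2 \cong T^2 \times \R^2$ (whose compactified complement off a tubular neighborhood is $T^2 \times D^2$), and meridian relation $\gamma_1 = \gamma_0 + \gamma_2$ coming from the unique generator of $H_2(\C P^2)$.

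Before the inductive step I would record the following observation: any monotonic Farey triple $a/b \prec c/d \prec e/f$ satisfies $c/d = (a+e)/(b+f)$, because adding $bc-ad=1$ and $de-cf=1$ gives $c(b+f) = d(a+e)$. Hence every path of length $n+3$ is obtained from a length-$(n+2)$ path by inserting a Farey mediant, and the inductive step reduces to realizing such an insertion. To insert $r/s = (p_i+p_{i+1})/(q_i+q_{i+1})$ between $p_i/q_i$ and $p_{i+1}/q_{i+1}$, I would locate the intersection point $p = \Sigma_i \cap \Sigma_{i+1}$, excise a small $4$-ball $B$ around $p$, and take a connect sum with $\C P^2$ by gluing in $\C P^2 \setminus B'$, where $B'$ is a small ball centered at the transverse intersection of two complex lines $L''_i, L''_{i+1} \subset \C P^2$ chosen disjoint from a third line $L'$. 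The gluing $\partial B \to \partial B'$ would be arranged to carry the Hopf link $(\Sigma_i \cap \partial B) \cup (\Sigma_{i+1} \cap \partial B)$ to the Hopf link $(L''_i \cap \partial B') \cup (L''_{i+1} \cap \partial B')$; I would then extend each $\Sigma_i \setminus B$ by the disk $L''_i \setminus B' \subset \C P^2 \setminus B'$ and take the new sphere to be $\Sigma' = L'$ with reversed orientation.

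A homological computation then gives $[\tilde\Sigma_i] = [\Sigma_i] + [L']$, $[\tilde\Sigma_{i+1}] = [\Sigma_{i+1}] + [L']$, and $[\Sigma'] = -[L']$, from which $\tilde\Sigma_i^2 = \Sigma_i^2 + 1$, $\tilde\Sigma_i \cdot \Sigma' = -1$, $\tilde\Sigma_i \cdot \tilde\Sigma_{i+1} = -1 + 1 = 0$, and all other intersection numbers unchanged, matching the Farey identity $\Delta(p_{i-1}, r/s) = \Delta(p_{i-1}, p_{i+1}) + 1$; geometrically $\tilde\Sigma_i$ and $\tilde\Sigma_{i+1}$ are disjoint because $L''_i \cap L''_{i+1}$ was chosen to lie inside the removed ball $B'$. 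For the complement, the $X$-side piece is $X \setminus N_0 = T^2 \times D^2$ by the inductive hypothesis, and the $\C P^2$-side piece is $\C P^2 \setminus (N(L' \cup L''_i \cup L''_{i+1}) \cup B')$, which is $T^2 \times D^2$ (by the base case applied to the three new lines) with a ball removed; the $S^3$-interface gluing then reassembles these into a new $T^2 \times D^2$. The meridian of $\Sigma'$ satisfies $\gamma' = \gamma_i + \gamma_{i+1}$ from the $H_1$-relation coming from $[L'] \in H_2$, and this equals $s\gamma_0 + r\gamma_{n+2}$ inductively.

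The main obstacle will be the careful geometric bookkeeping of the connect-sum gluing: aligning the Hopf link framings on $\partial B$ and $\partial B'$ with consistent signs, arranging the capping disks and the new sphere to realize all of the stated signed intersection numbers simultaneously, and verifying that the new tubular neighborhood can be chosen so that the complement is diffeomorphic, and not merely homotopy equivalent, to $T^2 \times D^2$.
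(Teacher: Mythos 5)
Your proposal takes the same route as the paper: the base case is three complex lines with the middle one orientation-reversed, and the inductive step realizes the insertion of a Farey mediant by a ``positive blow-up,'' i.e.\ a connected sum with $\C P^2$ at the intersection point of $\Sigma_i$ and $\Sigma_{i+1}$. You in fact supply much more detail than the paper's two-sentence proof, which says nothing at all about the complement or the meridian relation; your homology bookkeeping ($[\tilde\Sigma_i]=[\Sigma_i]+[L']$, $[\Sigma']=-[L']$, and the identity $\Delta(p_{i-1}/q_{i-1},r/s)=\Delta(p_{i-1}/q_{i-1},p_{i+1}/q_{i+1})+1$) is correct and matches the intended construction.

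Two steps need repair. First, your reduction from a path of length $n+3$ to one of length $n+2$ is incomplete: the identity $c/d=(a+e)/(b+f)$ holds for \emph{every} intermediate slope, but $(a+e,b+f)$ equals $\Delta(a/b,e/f)$ times the reduced pair $(c,d)$, so deleting $p_i/q_i$ yields a valid $\prec$-chain only when $\Delta(p_{i-1}/q_{i-1},p_{i+1}/q_{i+1})=1$. You still need to exhibit one such $i$; taking $i$ to maximize $p_i+q_i$ among the intermediate slopes works, since otherwise $p_{i-1}+q_{i-1}+p_{i+1}+q_{i+1}=\Delta\cdot(p_i+q_i)\ge 2(p_i+q_i)$ would force a neighbor at least as large. (The paper asserts this existence without proof as well.) Second, and more seriously, the complement argument as stated does not work: the region $\bigl(N(C)\setminus B\bigr)\cup\bigl(N(L'\cup L''_i\cup L''_{i+1})\setminus B'\bigr)$ is not a tubular neighborhood of the new configuration (it contains a full collar of the gluing $S^3$, since $B$ lies in the interior of $N(C)$), so the new complement is not obtained by gluing your two $T^2\times D^2$ pieces along the $S^3$ interface --- those pieces do not even meet that interface. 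The clean fix is to observe that the blow-up happens entirely in the interior of the old plumbing neighborhood $N$, and that $N\#\C P^2$ \emph{is} the plumbing neighborhood of the new configuration (the standard blow-up move in the plumbing calculus, with signs reversed); hence the complement is literally unchanged by the inductive step, and both the $T^2\times D^2$ claim and the meridian relation $\gamma'=\gamma_i+\gamma_{i+1}$ follow by induction from the base case exactly as you computed there.
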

\begin{proof}
If $n = 1$, then $p_1/q_1 = 1/1$, so taking $\Sigma_0$ and $\Sigma_2$ to be complex lines and $\Sigma_1$ to be a complex line with the orientation reversed satisfies the theorem. For $n > 1$, there will always be some $i$ with $p_{i-1}/q_{i-1} \prec p_{i+1}/q_{i+1}$, so the desired configuration can be constructed by first building the one corresponding to $$0/1 \prec \cdots \prec p_{i-1}/q_{i-1} \prec p_{i+1}/q_{i+1} \prec \cdots \prec 1/0$$ and then blowing up with another copy of $\C P^2$. 
\end{proof}

In the above construction, each newly-added $\C P^2$ creates some $\Sigma_i$, so we may give $H_2(\#^n \C P^2)$ an orthornormal basis $e_1,\dots,e_n$ with the property that $e_i$ is the class of a complex line in the copy of $\C P^2$ added to create $\Sigma_i$ (or if $p_i/q_i = 1/1$, then $e_i$ is the original $\C P^2$). 

For any slope $p/q$ not equal to $1/0$ or $0/1$, there are unique slopes $r_1/s_1$ and $r_2/s_2$ such that $r_1/s_1 \prec p/q \prec r_2/s_2$ and $r_1/s_1 \prec r_2/s_2$. We will call these the left and right parents of $p/q$, and write $r_1/s_1 \searrow p/q \nearrow r_2/s_2$. By the construction in Theorem~\ref{rings}, the left and right parents of $p_i/q_i$ will be among the $p_j/q_j$. 

\begin{lemma}
$e_i \cdot [\Sigma_j]$ is equal to $-1$ if $i = j$, to $1$ if $p_j/q_j \searrow p_i/q_i$ or $p_i/q_i \nearrow p_j/q_j$, and $0$ otherwise. 
\end{lemma}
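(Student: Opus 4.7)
The plan is to induct on $n$, using the recursive construction of the sphere configurations given in the proof of Theorem~\ref{rings}.

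For the base case $n = 1$, the sequence must be $0/1 \prec 1/1 \prec 1/0$, and the explicit description in that proof gives $[\Sigma_0] = [\Sigma_2] = e_1$ and $[\Sigma_1] = -e_1$. Since the intrinsic parents of $1/1$ are exactly $0/1$ and $1/0$, direct computation confirms the three required intersection numbers.

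For the inductive step, I would fix $i$ as in the proof of Theorem~\ref{rings}, so that $p_{i-1}/q_{i-1}$ and $p_{i+1}/q_{i+1}$ are the left and right parents of $p_i/q_i$. Let $\Sigma_j'$ denote the configuration in $\#^{n-1}\C P^2$ coming from the shorter sequence that omits $p_i/q_i$. The key intermediate step is to identify how the classes change under the single blow-up that inserts $\Sigma_i$:
\begin{equation*}
[\Sigma_i] = -e_i, \qquad [\Sigma_{i-1}] = [\Sigma_{i-1}'] + e_i, \qquad [\Sigma_{i+1}] = [\Sigma_i'] + e_i,
\end{equation*}
with all other $[\Sigma_j]$ unchanged. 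These can be derived by unpacking the blow-up geometrically, or verified from the required self-intersections and intersection pairings together with the Farey mediant identity $p_i = p_{i-1} + p_{i+1}$, $q_i = q_{i-1} + q_{i+1}$.

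With these formulas, the lemma reduces to casework on $e_{\mathrm{index}} \cdot [\Sigma_j]$. Pairing with the new $e_i$ yields $-1$ at $j = i$, $+1$ at $j = i \pm 1$ (which is exactly when $p_j/q_j$ is a parent of $p_i/q_i$, by the choice of insertion), and $0$ elsewhere by orthogonality. Pairing with an inherited $e_k$ ($k \ne i$) reduces to the old pairing $e_k \cdot [\Sigma_j']$, where the inductive hypothesis applies; to translate this to the new statement, I would use that intrinsic parents are invariants of the Farey graph---so the parents of any $p_k/q_k$ in the shorter path are still its parents in the full path---together with the observation that $p_i/q_i$ itself cannot be a parent of any previously existing $p_k/q_k$, because by the paragraph following Theorem~\ref{rings} those parents already lie among the shorter path's slopes.

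The main obstacle is pinning down the class identifications above, since Theorem~\ref{rings} as stated only records the diffeomorphism type of the configuration rather than the effect of the blow-up on homology classes. Beyond that, the argument is combinatorial bookkeeping.
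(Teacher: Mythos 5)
The paper states this lemma without proof, and your induction on the blow-up construction---tracking $[\Sigma_i] = -e_i$ and $[\Sigma_{i\pm1}] = [\Sigma_{i\pm1}'] + e_i$ through each resolution of a negative intersection point---is exactly the argument that the recursive construction of Theorem~\ref{rings} is meant to make immediate, and it is correct. The one point worth tightening is your final observation: the clean reason that $p_i/q_i$ is not a parent of any other $p_k/q_k$ is that every slope having $p_i/q_i$ as a parent lies strictly between $p_{i-1}/q_{i-1}$ and $p_{i+1}/q_{i+1}$, while $p_i/q_i$ is the only slope of the path in that open interval (the cited remark after Theorem~\ref{rings} concerns parents of slopes in the full path and does not by itself deliver this).
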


Define vectors $w_i \in H_2(\#^n \C P^2)$ for $0 \le i \le n+1$ inductively by $w_0 = w_{n+1} = 0$, and
\begin{equation*}
w_i = w_{j_1} + w_{j_2} + e_i
\end{equation*}
if $p_{j_1}/q_{j_1} \searrow p_i/q_i \nearrow p_{j_2}/q_{j_2}$.

\begin{cor}
$w_i \cdot [\Sigma_j]$ is equal to $-1$ if $i = j$, to $q_i$ if $j = 0$, to $p_i$ if $j = n+1$, and to zero otherwise. 
\end{cor}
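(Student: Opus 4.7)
The natural approach is induction on the depth of $p_i/q_i$ in the Farey tree, equivalently on the order in which $p_i/q_i$ is added to the sequence during the construction of Theorem~\ref{rings}. Since $w_i = w_{j_1} + w_{j_2} + e_i$, the obvious move is to decompose
\[ w_i \cdot [\Sigma_j] = w_{j_1} \cdot [\Sigma_j] + w_{j_2} \cdot [\Sigma_j] + e_i \cdot [\Sigma_j], \]
controlling the first two summands by the inductive hypothesis (with $w_0 = w_{n+1} = 0$ as the trivial base cases, contributing $0$ for every $j$) and the third by the preceding lemma. The arithmetic input that makes everything close up is the mediant identity $p_i = p_{j_1} + p_{j_2}$ and $q_i = q_{j_1} + q_{j_2}$, which holds automatically for Farey parents and remains valid at the boundary under the conventions $p_0 = 0$, $q_0 = 1$, $p_{n+1} = 1$, $q_{n+1} = 0$.

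The case analysis then splits on $j$. When $j = i$, the lemma gives $e_i \cdot [\Sigma_i] = -1$ while $w_{j_1} \cdot [\Sigma_i]$ and $w_{j_2} \cdot [\Sigma_i]$ both vanish inductively, because $j_1, j_2 \neq i$ and $i \not\in \{0, n+1\}$. When $j = 0$, the lemma produces $e_i \cdot [\Sigma_0] = +1$ exactly when $j_1 = 0$ (i.e.\ when $0/1$ is a parent of $p_i/q_i$), and combining this with the inductive values $q_{j_1}$ and $q_{j_2}$ from $w_{j_1}$ and $w_{j_2}$ reassembles $q_i$ via the mediant identity. The case $j = n+1$ is symmetric, using $p_i = p_{j_1} + p_{j_2}$. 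For $j \notin \{0, n+1, i\}$, the lemma gives $e_i \cdot [\Sigma_j] = +1$ precisely when $j \in \{j_1, j_2\}$, and in that case the $-1$ contributed inductively by $w_j \cdot [\Sigma_j]$ cancels it; otherwise all three terms are zero.

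The one point that genuinely needs care is the bookkeeping when one of the parent indices $j_1, j_2$ is itself a boundary index $0$ or $n+1$, since then the inductive formula $w_{j_k} \cdot [\Sigma_0] = q_{j_k}$ fails: $w_0 = 0$, not $q_0 = 1$. What rescues the argument is that the lemma gives the compensating $+1$ from $e_i \cdot [\Sigma_0]$ exactly in the configurations where a boundary slope is a parent of $p_i/q_i$, and this $+1$ is precisely the missing $q_0 = 1$ that a fictitious ``$w_0 \cdot [\Sigma_0]$'' would have contributed. The identical mechanism, with $p_{n+1} = 1$, handles $j = n+1$. Once this accounting is set up cleanly, there is no further obstacle; everything else is routine substitution.
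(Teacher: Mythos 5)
Your proof is correct: the induction on the Farey recursion, using the lemma for the $e_i$ term and the mediant identity $p_i = p_{j_1}+p_{j_2}$, $q_i = q_{j_1}+q_{j_2}$, with the $+1$ from $e_i\cdot[\Sigma_0]$ (resp.\ $e_i\cdot[\Sigma_{n+1}]$) exactly replacing the contribution lost to the convention $w_0 = w_{n+1} = 0$, is precisely the argument the paper intends, which it leaves implicit by stating the result as an immediate corollary of the lemma and the recursive definition of $w_i$. Your careful handling of the boundary-parent bookkeeping is the only nontrivial point, and you got it right.
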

The nonzero components of $w_i$, sorted in increasing order, form the weight expansion $w_{p_i/q_i}$ from \cite{greene2013lens}, and their appearance here at least partly explains Greene's observation that the complementary vectors occuring in that classification are close to being weight expansions. 

\section{Configurations of many spheres: four-manifolds with lens space boundary and $b_2 = 1$}

Suppose one deletes one of the spheres from one of the configurations from Theorem~\ref{rings}, and then smooths two of the remaining intersection points. The result will be a linear plumbing of $n-1$ spheres embedded in $\#^n \C P^2$, so the complement of a neighborhood of this new configuration will have $b_2 = 1$ and lens space boundary. 

Many examples of such manifolds come from the traces of lens space surgeries, and the resulting lens spaces were classified by Greene~\cite{greene2013lens}, who also asked if any other lens space could be the boundary of a simply-connected four-manifold with $b_2 = 1$. Using this construction, we will show that the answer to this question is yes. 

Starting from one of the configurations $\Sigma_0,\dots,\Sigma_{n+1}$ from Theorem~\ref{rings}, delete $\Sigma_0$ and smooth the intersections between $\Sigma_i$ and $\Sigma_{i+1}$ and between $\Sigma_j$ and $\Sigma_{j+1}$, possibly reorienting the spheres so that the intersection signs are $\epsilon_i$ and $\epsilon_j$, respectively. 

\begin{prop}
If $p_i + \epsilon_i p_{i+1}$ and $p_j + \epsilon_j p_{j+1}$ are coprime, the complement of this new configuration is simply connected, and the image of its second homology in $H_2(\#^n \C P^2)$ is generated by $$\sigma = (p_j + \epsilon_j p_{j+1})(w_i + \epsilon_i w_{i+1} - (p_i + \epsilon_i p_{i+1})(w_j + \epsilon_j w_{j+1})$$
\end{prop}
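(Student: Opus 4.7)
My plan is to extract both claims from the long exact sequence of the pair $(X, Y)$ with $X = \#^n \C P^2$ and $Y$ the complement of a tubular neighborhood $N$ of the new configuration $C'$. Since $C'$ is a linear plumbing of $n - 1$ spheres, $N$ is homotopy equivalent to a wedge of $n - 1$ copies of $S^2$, and Poincaré--Lefschetz duality together with excision gives $H_2(X, Y) \cong H^2(N) \cong \Z^{n-1}$ (with $H_3(X,Y) = H^1(N) = 0$). The resulting portion of the exact sequence
\begin{equation*}
0 \to H_2(Y) \to H_2(X) \xrightarrow{\phi} \Z^{n-1} \to H_1(Y) \to 0
\end{equation*}
has $\phi(\alpha) = (\alpha \cdot [S_k])_k$, the intersection numbers with the $n-1$ sphere classes of $C'$. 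These classes consist of $[\Sigma_k]$ for $k \notin \{0, i, i+1, j, j+1\}$ together with the two smoothed classes; bookkeeping the sign conventions (the original intersection sign is $-1$ by Theorem~\ref{rings}, and reorienting $\Sigma_{i+1}$ as needed to obtain sign $\epsilon_i$) shows that the two resolved classes are $[\Sigma_i] - \epsilon_i [\Sigma_{i+1}]$ and $[\Sigma_j] - \epsilon_j [\Sigma_{j+1}]$.

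The first substantive step is to verify $\sigma \in \ker \phi$ using the corollary. Most intersections vanish for free: for an unchanged $\Sigma_k$ with $k \neq n+1$, none of $w_i, w_{i+1}, w_j, w_{j+1}$ pairs nontrivially with $[\Sigma_k]$. For $\Sigma_{n+1}$ (when it is not among the smoothed spheres), the two summands of $\sigma$ contribute $(p_j + \epsilon_j p_{j+1})(p_i + \epsilon_i p_{i+1})$ and its negative, which cancel. For the two smoothed classes a direct calculation finishes things off; for example
\begin{equation*}
\sigma \cdot ([\Sigma_i] - \epsilon_i [\Sigma_{i+1}]) = -(p_j + \epsilon_j p_{j+1}) + \epsilon_i^2(p_j + \epsilon_j p_{j+1}) = 0.
\end{equation*}

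For simple connectivity and primitivity of $\sigma$ I would run a $\pi_1$ argument. By Theorem~\ref{rings} the original ring complement is $T^2 \times D^2$ with $\pi_1 = \Z\gamma_0 \oplus \Z\gamma_{n+1}$ and $\gamma_k = q_k \gamma_0 + p_k \gamma_{n+1}$. Deleting $\Sigma_0$ glues in a meridional disc along $\gamma_0$, killing it and leaving $\pi_1 = \Z\langle \gamma_{n+1}\rangle$ with $\gamma_k = p_k \gamma_{n+1}$. Each smoothing at $(k, k+1)$ fuses the two local sheets into a single surface and thereby identifies $\gamma_k$ with $-\epsilon_k \gamma_{k+1}$, imposing the relation $(p_k + \epsilon_k p_{k+1})\gamma_{n+1} = 0$. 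The coprimality hypothesis then collapses $\pi_1(Y)$ to the trivial group, proving simple connectivity and in particular $H_1(Y) = 0$, which via the exact sequence makes $\ker \phi$ a saturated rank-one subgroup of $H_2(X)$. Finally, if $\sigma = m \tau$ in $H_2(X)$ for some integer $m$, then $m$ must divide $\sigma \cdot [\Sigma_i] = -(p_j + \epsilon_j p_{j+1})$ and $\sigma \cdot [\Sigma_j] = p_i + \epsilon_i p_{i+1}$, forcing $m = \pm 1$ by coprimality, so $\sigma$ itself generates. I expect the main obstacle to be the orientation bookkeeping in the smoothing step---simultaneously pinning down the sign in the resolved homology class and the sign in the $\pi_1$ relation between $\gamma_k$ and $\gamma_{k+1}$---but once these are consistent the rest is an exercise in applying the corollary.
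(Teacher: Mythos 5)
The paper states this proposition without proof, so there is no argument of the author's to compare against; judged on its own, your proposal is correct and is almost certainly the intended one, since it uses exactly the machinery the paper sets up for this purpose: the exact sequence of the pair $(\#^n \C P^2, Y)$ identifies the image of $H_2(Y)$ with the orthogonal complement of the configuration's classes, the corollary computing $w_a \cdot [\Sigma_b]$ shows $\sigma$ lies in that kernel and (since $\sigma\cdot[\Sigma_i] = -(p_j+\epsilon_j p_{j+1})$ and $\sigma\cdot[\Sigma_j] = p_i+\epsilon_i p_{i+1}$ are coprime) generates it, and the $T^2\times D^2$ description with $\gamma_a = q_a\gamma_0 + p_a\gamma_{n+1}$ from Theorem~\ref{rings} reduces $\pi_1$ of the complement to $\Z\langle\gamma_{n+1}\rangle$ modulo $(p_i+\epsilon_i p_{i+1})\gamma_{n+1}$ and $(p_j+\epsilon_j p_{j+1})\gamma_{n+1}$, which coprimality kills. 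Your sign bookkeeping is right: taking the smoothed class to be $[\Sigma_i]-\epsilon_i[\Sigma_{i+1}]$ in the original orientations is exactly what makes $\sigma$ orthogonal to it, and it is consistent with the paper's worked example $\sigma = (2n-1)(w_{k-1}+w_k)-(2k-1)(w_{n-1}+w_n)$. The only points deserving slightly more care are routine: the van Kampen verifications that regluing $N(\Sigma_0)$ and the smoothing balls introduce no new $\pi_1$ generators (only the relations $\gamma_0=1$ and $\gamma_k = -\epsilon_k\gamma_{k+1}$), the degenerate cases where a smoothed pair involves $\Sigma_{n+1}$ (where $w_{n+1}=0$, and the same cancellation still goes through), and the remark that it is surjectivity of $\phi$, coming from $H_1(Y)=0$, that forces $\ker\phi$ to have rank one (its saturation is automatic).
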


Central to the classification from \cite{greene2013lens} is the concept of a changemaker vector:
\begin{defn}
$\sigma = (\sigma_1,\dots,\sigma_n) \in \Z^n$ is a changemaker if, after replacing its entries with their absolute values and arranging them in increasing order, $$\sigma_i \le 1 + \sum_{j < i} \sigma_j$$
\end{defn}
A slight generalization of Greene's arguments (specifically the proof of Theorem 3.3 in \cite{greene2015space}) gives the following:
\begin{prop}
If the vector $(\sigma\cdot e_i)_{i = 1}^n$ with $\sigma$ as above is not a changemaker, then the complement of the configuration is not the trace of any integral surgery. 
\end{prop}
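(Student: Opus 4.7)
The plan is to argue by contradiction. Suppose $X$ is diffeomorphic to the trace $X_K(p)$ of $p$-surgery on some knot $K \subset S^3$, and follow the proof of Theorem~3.3 in \cite{greene2015space} to derive the changemaker condition on $(\sigma \cdot e_i)_{i=1}^n$.

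Reversing orientation on the ambient $\#^n \C P^2$ presents it tautologically as the closed negative definite four-manifold $W = \overline{X} \cup_{L(p,q)} \overline{P} \cong \#^n \overline{\C P^2}$, where $P$ is the positive definite plumbing removed from $\#^n \C P^2$ to obtain $X$. Its intersection lattice is therefore the standard negative diagonal lattice, with orthonormal basis $-e_1,\dots,-e_n$. The classes of the spheres making up $\overline{P}$ (after the two smoothings) span a rank $n-1$ sublattice of $H_2(W)$, and the preceding proposition identifies $\pm\sigma$ as a generator of its orthogonal complement; equivalently, $\sigma$ is the image under inclusion of the generator of $H_2(\overline{X})$.

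If $X \cong X_K(p)$, then this generator is represented in $W$ by a closed surface of genus $g_3(K)$, obtained by capping a minimal-genus Seifert surface of $K$ with the core of the $2$-handle. Greene's argument combines this geometric input with the adjunction inequality in $\#^n \overline{\C P^2}$ and the Ni-Wu formula expressing the Heegaard Floer $d$-invariants of $S^3_p(K)$ in terms of the $V_k$-invariants of $K$, and deduces that the nonzero coordinates of $\sigma$ in the $e_i$ basis, arranged in nondecreasing absolute value, satisfy the changemaker inequalities. In Greene's original setup the negative definite filling is taken to be the canonical linear plumbing $Y(p,q)$, but the only properties of that filling actually used are that it is a negative definite sphere plumbing bounding $L(p,q)$ with sphere classes embedding as an index-$p$ sublattice of the diagonal form; the $d$-invariant comparison itself depends only on the lens space. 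The same chain of inequalities therefore applies with $\overline{P}$ in place of $Y(p,q)$, yielding the changemaker property for $\sigma$.

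The main obstacle is verifying that Greene's proof really does go through with the non-canonical filling $\overline{P}$; this is the content of the ``slight generalization'' noted in the excerpt. The work consists mainly of matching Greene's indexing conventions (where coordinates are ordered along the linear plumbing $Y(p,q)$) to our setting (where coordinates come from the Farey-graph labeling of the $e_i$), and checking that the genus-$g_3(K)$ representative can be kept disjoint from $\overline{P}$ so that the adjunction computation is genuinely carried out in $\#^n \overline{\C P^2}$. Once this is in place, the contrapositive delivers the proposition: if $(\sigma \cdot e_i)_{i=1}^n$ is not a changemaker, then no such $K$ exists, so $X$ is not the trace of any integral surgery.
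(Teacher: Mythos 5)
Your overall plan is the same as the paper's: the paper offers no independent argument for this proposition, but simply asserts that it follows by adapting the proof of Theorem~3.3 of \cite{greene2015space} to the filling at hand, and your reduction --- reverse orientation, view $\#^n\overline{\C P^2}$ as $\overline{X}\cup_{\partial}\overline{P}$ with $\overline{X}=-W_p(K)$, identify $\sigma$ with the image of the generator of $H_2$ of the trace, and run Greene's argument --- is exactly that. However, your description of what Greene's proof uses, and hence of what the ``slight generalization'' must verify, is off in two concrete ways. First, the changemaker condition is not extracted from an adjunction inequality in $\#^n\overline{\C P^2}$: that manifold is negative definite ($b_2^+=0$), so no Seiberg--Witten adjunction-type statement is available there, and no genus-$g_3(K)$ representative kept disjoint from $\overline{P}$ enters the argument. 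The engine is lattice-theoretic and Heegaard Floer: Donaldson diagonalization of the closed-up manifold (here automatic, since the closed manifold is literally $\#^n\overline{\C P^2}$), the Ozsv\'ath--Szab\'o correction-term inequality $c_1(\mathfrak z)^2+b_2\le 4d$ for the negative-definite filling, and the Ni--Wu formula for $d(S^3_p(K))$ in terms of the $V_k$.

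Second, and more importantly, the hypothesis on the filling in Greene's theorem is not merely ``negative definite sphere plumbing bounding the lens space with an index-$p$ sublattice''; it is that the filling is \emph{sharp}, i.e.\ that for every $\operatorname{Spin}^c$ structure on the boundary the correction-term bound is attained. Your assertion that ``the $d$-invariant comparison itself depends only on the lens space'' papers over exactly the point that has to be checked when $Y(p,q)$ is replaced by $\overline{P}$. The gap is fixable: $\overline{P}$ is a negative-definite linear plumbing of spheres bounding the lens space, so (after blowing down any $-1$-weighted spheres) it is the canonical linear plumbing, which Ozsv\'ath--Szab\'o showed is sharp, and sharpness is preserved under blow-up; with that observation Greene's proof applies essentially verbatim. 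But as written your proposal directs the verification effort at the wrong steps (disjointness of a Seifert-surface cap, adjunction) while waving off the one property, sharpness of $\overline{P}$, on which the changemaker conclusion actually rests. A minor additional point worth recording: since the complement is positive definite with $b_2=1$, any trace structure forces the surgery coefficient to be positive, which is needed before invoking Ni--Wu.
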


For a particular family of examples, put $p_k/q_k = k/1$ for $0 \le k \le n$, delete $\Sigma_0$, orient the spheres so that all intersections are positive, and then smooth the intersections between $\Sigma_{n-1}$ and $\Sigma_{n}$ and between $\Sigma_{k-1}$ and $\Sigma_{k}$ for some $1 < k < n$. In this case, we have $w_i = e_1 + \cdots + e_i$ for $1 \le i \le n$, so
\begin{align*}
\sigma &= (2n-1)(w_{k-1} + w_k) - (2k-1)(w_{n-1} + w_n) \\
&= 4(n-k)(e_1+\cdots+e_{k-1}) + (2n-4k+1)e_k - (4k-2)(e_{k+1} + \cdots + e_{n-1}) - (2k-1)e_n
\end{align*}
All entries of $\sigma$ except for possibly $2n-4k+1$ are at least $3$ in absolute value, so this is never a changemaker. The self-intersection numbers of the spheres in the new configuration are $$\underbrace{2,\dots,2}_{k-2},6,\underbrace{2,\dots,2}_{n-k-2},5,n,$$ so by computing this continued fraction we get
\begin{theorem}\label{lensbound}
If $1 < k < n$ and $(2k-1,2n-1) = 1$, then the lens space $$L(16 n^{2} k - 16 n k^{2} - 12 n^{2} + 4 k^{2} + 8 n - 2, 16 n k - 16 k^{2} - 12 n + 4 k + 5)$$ bounds a simply connected, positive definite four-manifold with $b_2 = 1$ that is not the trace of any integral surgery. 
\end{theorem}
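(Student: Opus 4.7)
The plan is to apply the two Propositions to the specific configuration constructed just before the theorem and then identify the lens space via a continued fraction computation.

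First I would verify the hypothesis of the first Proposition. With $p_m/q_m = m/1$ and positive intersection signs throughout, the coprimality condition $(p_{k-1} + p_k,\, p_{n-1} + p_n) = 1$ reduces to $(2k-1,\, 2n-1) = 1$. Invoking the Proposition yields a simply connected complement $X$ whose image in $H_2(\#^n \C P^2)$ is generated by the displayed $\sigma$. A Mayer--Vietoris argument over $\Q$, using that $\partial X$ is a rational homology sphere and that the sphere configuration contributes $n-1$ classes to $H_2$, forces $b_2(X) = 1$, and the intersection form of $X$ is positive definite because it equals the restriction of the ambient diagonal form to $\Q \sigma$.

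Next I would verify that $\sigma$ is not a changemaker. Under the hypothesis $1 < k < n$, the nonzero coefficients of $\sigma$ have absolute values $4(n-k)$, $|2n - 4k + 1|$, $4k - 2$, and $2k - 1$, the last three of which are each at least $3$. For $\sigma$ to be a changemaker the smallest entry must be at most $1$ and the second smallest at most $2$; but even in the two exceptional cases $n \in \{2k-1,\, 2k\}$ where $|2n - 4k + 1| = 1$, every other entry of $\sigma$ still has absolute value at least $3$. Applying the second Proposition then rules out $X$ being the trace of any integral surgery.

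Finally I would identify $\partial X$ by evaluating the Hirzebruch--Jung continued fraction of the self-intersection sequence $\underbrace{2, \ldots, 2}_{k-2},\, 6,\, \underbrace{2, \ldots, 2}_{n-k-2},\, 5,\, n$ (or, in the degenerate case $k = n - 1$ where the two smoothings share the sphere $\Sigma_{n-1}$, the shorter sequence $\underbrace{2, \ldots, 2}_{k-2},\, 9,\, n$ obtained by merging three spheres into one). Iterating the substitution $(p, q) \mapsto (a p - q,\, p)$ starting from $(n, 1)$, with $a$ ranging in turn over the entries from right to left, reproduces the claimed numerator $16 n^2 k - 16 n k^2 - 12 n^2 + 4 k^2 + 8 n - 2$; the resulting denominator matches the claimed $q$ either directly or after replacement by its inverse modulo $p$, which is harmless since $L(p, q) \cong L(p, q^{-1})$. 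I expect the main obstacle to be this continued fraction evaluation, not for any conceptual reason but purely because of the bookkeeping involved; the topological content has been handled entirely by the two preceding Propositions.
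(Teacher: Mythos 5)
Your proposal follows essentially the same route as the paper: apply the two propositions to the configuration $p_m/q_m = m/1$ with all intersections positive (coprimality $(2k-1,2n-1)=1$, non-changemaker since every entry of $\sigma$ except possibly $2n-4k+1$ has absolute value at least $3$), then identify the boundary via the continued fraction of $2,\dots,2,6,2,\dots,2,5,n$. Your extra observations---the Mayer--Vietoris justification of $b_2=1$, the degenerate case $k=n-1$ where the chain collapses to $2,\dots,2,9,n$, and the harmless $q \leftrightarrow q^{-1}$ ambiguity---are details the paper leaves implicit, and they do not change the argument.
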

In fact, examining the classification from \cite{greene2013lens}, these lens spaces are never the result of any positive integer surgery (this is easiest to check by showing that the relevant lattices never embed as the complement of a changemaker vector).

By changing the sequence $p_i/q_i$ and the choices of which intersection points to smooth, one obtains many other values for $(p,q)$ and the complementary vector $\sigma$, and in the majority of cases $\sigma$ will not be a changemaker vector. Furthermore, in addition to smoothing two intersection points, there are a variety of other cut-and-paste techniques that can be used to eliminate two spheres from the configuration. For example, if $\Sigma_{i-1}$, $\Sigma_i$, and $\Sigma_{i+1}$ are three adjacent spheres in the configuration, one can form the connected sum of $\Sigma_{i-1}$ to $\Sigma_{i+1}$ via a tube running around $\Sigma_i$, making $\Sigma_i$ disjoint from the rest of the configuration, and then replace any of the other spheres in the configuration with its connected sum with $\Sigma_i$. These result in even more examples of lens spaces bounding positive-definite four manifolds with $b_2 = 1$, but enumerating which $L(p,q)$ occur in these constructions is not completely straightforward. 

\section{Configurations of few spheres: spheres with large square in $\#^n \C P^2$}

\begin{prop}
If $\Sigma_0,\dots,\Sigma_{n+1}$ are as in Theorem~\ref{rings}, then $$\sum_{i=0}^{n+1} \Sigma_i \cdot \Sigma_i = 3n.$$
\end{prop}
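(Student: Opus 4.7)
The plan is to induct on $n$, following the blow-up construction in the proof of Theorem~\ref{rings}. The base case $n = 1$ is immediate: the three spheres are complex lines in $\C P^2$ (one with reversed orientation), each of self-intersection $+1$, so the sum is $3 = 3 \cdot 1$.

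For the inductive step, I pass from a configuration with $n-1$ interior slopes in $\#^{n-1}\C P^2$ to one with $n$ interior slopes in $\#^n\C P^2$ by inserting a new slope $p_i/q_i$ between two adjacent ones $p_{i-1}/q_{i-1} \prec p_{i+1}/q_{i+1}$; since consecutive slopes must be Farey neighbors on both sides, $p_i/q_i$ is forced to be their mediant $(p_{i-1}+p_{i+1})/(q_{i-1}+q_{i+1})$. The only self-intersections that change between the two configurations are those of $\Sigma_{i-1}$, the new $\Sigma_i$, and $\Sigma_{i+1}$, since only these three have modified neighbor pairs in the ring. It therefore suffices to show that their combined contribution increases by exactly $+3$.

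The computational ingredient is the mediant additivity identity: if $m/\ell$ is the mediant of $a/b \prec e/f$, then for any slope $r/s < a/b$,
\[
\Delta(r/s,\,m/\ell) \;=\; |r(b+f) - s(a+e)| \;=\; (sa-rb) + (se-rf) \;=\; \Delta(r/s,\,a/b) + \Delta(r/s,\,e/f),
\]
since both cross-products share a sign; the symmetric statement holds when $r/s > e/f$. Applying this with $r/s = p_{i-2}/q_{i-2}$ shows $\Sigma_{i-1}^2$ grows from $\Delta(p_{i-2}/q_{i-2},\,p_{i+1}/q_{i+1})$ to $\Delta(p_{i-2}/q_{i-2},\,p_{i-1}/q_{i-1}) + \Delta(p_{i-2}/q_{i-2},\,p_{i+1}/q_{i+1})$, a gain of $\Delta(p_{i-2}/q_{i-2},\,p_{i-1}/q_{i-1}) = 1$; symmetrically $\Sigma_{i+1}^2$ gains $1$; and the new sphere contributes $\Sigma_i^2 = \Delta(p_{i-1}/q_{i-1},\,p_{i+1}/q_{i+1}) = 1$. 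Summing yields $+3$, so the inductive hypothesis gives $3(n-1) + 3 = 3n$.

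The only bookkeeping subtlety is the cyclic wrap-around at $i = 1$ or $i = n$, where $p_{i\pm 2}/q_{i\pm 2}$ must be read as $1/0$ or $0/1$; the mediant identity applies uniformly with these boundary values, so no special argument is required and the induction goes through.
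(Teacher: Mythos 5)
Your proof is correct and follows essentially the same route as the paper's: both induct over the blow-up construction of Theorem~\ref{rings}, starting from the three lines (sum $3$) and showing each insertion raises the total self-intersection by exactly $3$. The only difference is in how the $+3$ per step is justified --- the paper states it geometrically (each blow-up introduces a new $+1$-sphere and increases two neighboring self-intersections by $1$), while you verify the same increment combinatorially via the mediant additivity identity for $\Delta$, which amounts to the same bookkeeping.
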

\begin{proof}
This is true for the initial configuration of three complex lines in $\C P^2$, and each blow-up introduces a new sphere of self-intersection $1$ and increases the self-intersections of two other spheres by $1$. 
\end{proof}

Therefore, if we remove from the configuration some $\Sigma_i$ with $\Sigma_i \cdot \Sigma_i = 1$, reorient the remaining $n+1$ spheres so that all of the intersections are positive, and then smooth all intersection points to produce one sphere $\Sigma$, then $\Sigma \cdot \Sigma = 5n-1$. When $n$ is not divisible by four, this seems to be the largest self intersection among spheres in $\#^n \C P^2$. However, when $n=4$, one can do slightly better:

\begin{theorem}\label{twenty}
There is a sphere of self-intersection $20$ in $\#^4 \C P^2$. 
\end{theorem}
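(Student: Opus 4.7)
The standard construction from the previous proposition yields a sphere of self-intersection $5 \cdot 4 - 1 = 19$ in $\#^4 \C P^2$, so the plan is to gain one additional unit by smoothing five out of six intersections in a ring of six spheres---producing an \emph{immersed} sphere of self-intersection $20$ with a single double point---and then to resolve that double point to obtain an embedded sphere.

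Apply Theorem~\ref{rings} to the Farey sequence $0/1 \prec 1/1 \prec 2/1 \prec 3/1 \prec 4/1 \prec 1/0$, producing a ring of six spheres in $\#^4 \C P^2$ with classes $\Sigma_0 = e_1$, $\Sigma_1 = -e_1+e_2$, $\Sigma_2 = -e_2+e_3$, $\Sigma_3 = -e_3+e_4$, $\Sigma_4 = -e_4$, and $\Sigma_5 = e_1+e_2+e_3+e_4$, with self-intersections summing to $12$ and the consecutive ring intersection signs $(-,-,-,-,-,+)$. Reversing the orientation of $\Sigma_0,\Sigma_2,\Sigma_4$ toggles all six signs simultaneously (each reversal toggles the two incident ring intersections), yielding the new pattern $(+,+,+,+,+,-)$. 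Smoothing the five positive intersections then produces an immersed sphere $S$ in $\#^4 \C P^2$ of class
\[
-\Sigma_0+\Sigma_1-\Sigma_2+\Sigma_3-\Sigma_4+\Sigma_5 = (-1,3,-1,3),
\]
with self-intersection $1+9+1+9 = 20$, having exactly one transverse negative double point, namely the remaining intersection $\Sigma_5 \cap (-\Sigma_0)$.

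To convert $S$ into a smoothly embedded sphere of the same homology class, resolve the double point: the formula $[S]^2 = e(\nu_S) + 2d$ with $d = -1$ forces $e(\nu_S) = 22$, while an embedded sphere of class $(-1,3,-1,3)$ would have $e = 20$, so a regular homotopy does not suffice. Perform a positive cusp move inside a ball containing the existing negative double point; this introduces a positive double point inside the same ball and lowers $e(\nu_S)$ to $20$. The resulting immersion has one positive and one negative double point, both inside the ball, and a small Whitney disk for this pair---also inside the ball---provides a smooth Whitney move cancelling them and producing the desired embedded sphere of self-intersection $20$. The substantive content of the argument is the second step, in which the ring is oriented so that smoothing five of six intersections leaves exactly one double point and produces an immersed $20$-sphere; the final local resolution is routine because both moves can be confined to a single ball, where no four-dimensional smoothness obstruction arises.
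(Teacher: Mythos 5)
Your construction of the immersed sphere is correct and is a nice repackaging of the ring configuration: with the Farey sequence $0/1 \prec 1/1 \prec 2/1 \prec 3/1 \prec 4/1 \prec 1/0$, the reorientation of $\Sigma_0,\Sigma_2,\Sigma_4$ does toggle all six ring intersections, smoothing the five positive ones does yield an immersed sphere in the class $(-1,3,-1,3)$ of square $20$ with a single negative double point, and your normal Euler number bookkeeping ($e = 22$, dropping to $20$ after a positive kink) is consistent. But the final step is a genuine gap, and it is exactly where the whole difficulty of the problem lives. The negative double point is an intersection between two \emph{different} local sheets (the pieces coming from $\Sigma_5$ and $\Sigma_0$), and inside any ball containing it those two sheets are properly embedded disks with algebraic intersection number $-1$; no modification supported in that ball rel boundary can make them disjoint, and adding a positive kink to one sheet does not change their pairwise intersection number. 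Hence the positive and negative double points cannot be cancelled ``inside the same ball.'' Cancelling them by a Whitney move requires arcs joining the two double points on each branch; since the two branches at the original double point are connected to each other only by going around the sphere globally, the Whitney circle is global, and one then needs an embedded, correctly framed Whitney disk with interior disjoint from the sphere. The existence of such a disk is precisely the smooth four-dimensional obstruction that cannot be dismissed as routine: if algebraically cancelling pairs of double points of an immersed sphere could always be removed this way, one could, for instance, add kinks to an immersed sphere in the class $3e_1+2e_2$ in $\C P^2 \# \C P^2$ and cancel them to produce the embedded sphere that the introduction of this paper singles out as unlikely to exist.

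For comparison, the paper's proof avoids this issue entirely by a different construction: the twist knots $K_n$ (negative clasp, $n$ half twists) satisfy $K_n \sim K_{n-1}$ via a concordance annulus in a twice-punctured $\C P^2$ whose class is a generator (for $n$ even) or three times a generator (for $n$ odd), so composing four of these gives an annulus of square $1+9+1+9 = 20$ from $K_4$ to $K_0$ in a twice-punctured $\#^4 \C P^2$, which is capped off using a slice disk for $K_4$ and the unknottedness of $K_0$. If you want to salvage your approach, you would need to replace the cusp-plus-Whitney step by some genuinely four-dimensional input (for instance, a cut-and-paste that trades the offending intersection for a cap coming from a slice disk, which is in effect what the paper does); as written, the last paragraph does not constitute a proof.
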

\begin{proof}
Let $K_n$ be the twist knot with a negative clasp and $n$ half twists. Inside of a twice-punctured $\C P^2$, $K_n$ is concordant to $K_{n-1}$ (see Figure~\ref{concordance}), and the homology class of this annulus is a generator of $H_2(\C P^2)$ if $n$ is even and three times a generator if $n$ is odd. Therefore, $K_4$ is concordant to $K_0$ in a twice-punctured $\#^4 \C P^2$ via an annulus of self-intersection $1 + 9 + 1 + 9 = 20$, and since $K_4$ is slice and $K_0$ is the unknot this annulus may be capped off to the desired sphere.
\end{proof}
Taking connected sums, there is in fact a sphere of self-intersection $5n$ in $\#^n \C P^2$ whenever $n$ is divisible by $4$. 

\begin{figure}
\centering
\includegraphics[width = 0.9\textwidth]{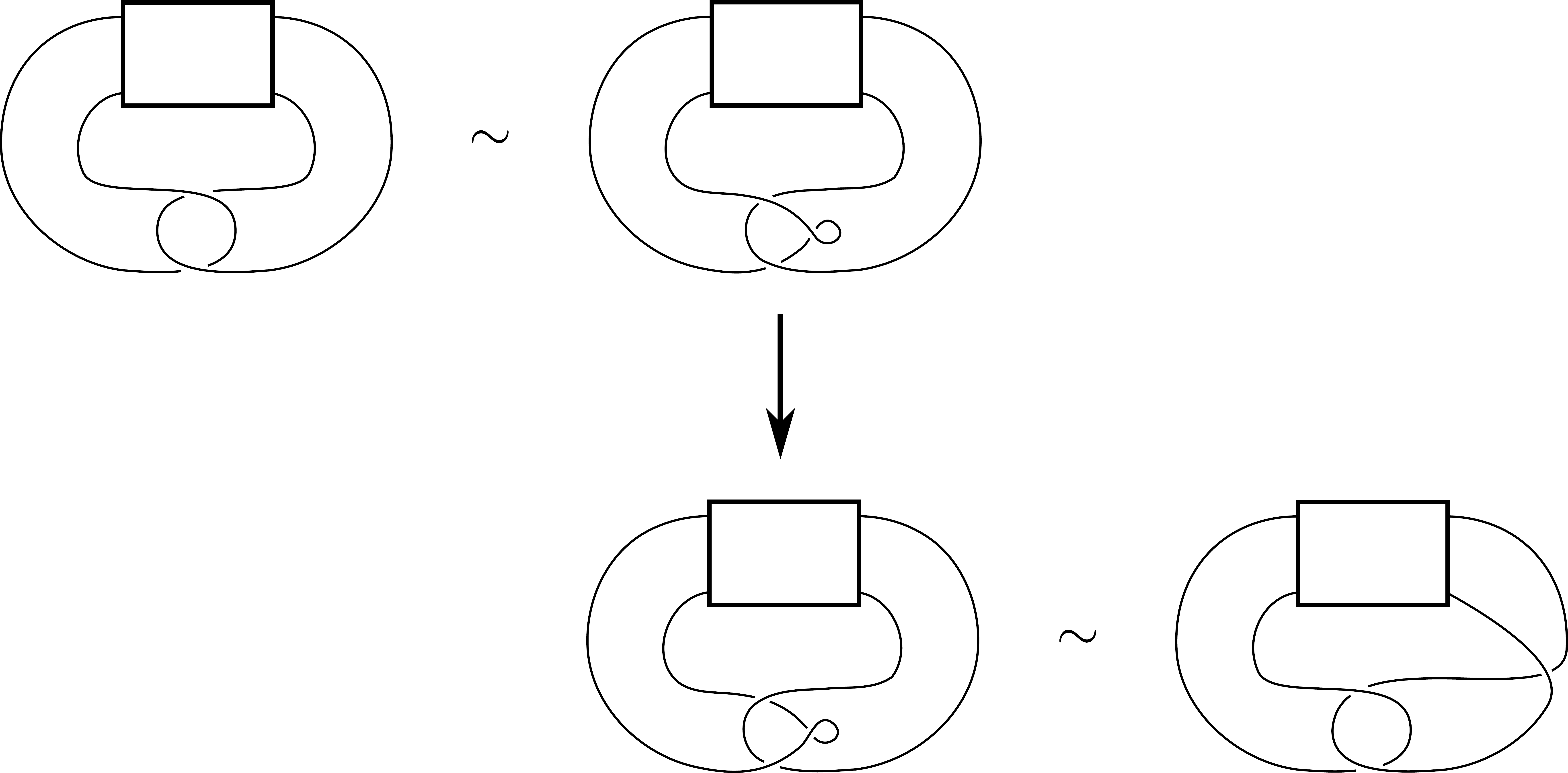}
\caption{A full positive twist transforms $K_n$ into $K_{n-1}$, with linking number $1$ if $n$ is even and $3$ if $n$ is odd.}\label{concordance}
\end{figure}

This construction can be related to the previous ones by adding a disk whose boundary goes once around the ``ring" to the configuration of spheres $\Sigma_0,\dots,\Sigma_{n+1}$; a neighborhood of the resulting complex is a four manifold built by attaching $n+1$ two-handles to $B^4$ along a circular chain of unknots. By sliding one of these two handles over all of the others, one gets an embedding of the trace of $5n$ surgery on the twist knot $K_n$ in $\#^n \C P^2$, and when $n = 4$ this trace contains a sphere. 

While there does not seem to be a way to construct a sphere of self intersection larger than $5n$ in $\#^n \C P^2$, existing invariants are not able to prove that none exist. However, the sphere $\Sigma$ constructed in Theorem~\ref{twenty} is at least locally optimal - if two homologically essential spheres in $\#^n \C P^2$ intersect in at most one point, then their connected sum will be an embedded sphere of larger self-intersection number, but no such construction can improve $\Sigma$:

\begin{prop}
Any homologically essential sphere in $\#^4 \C P^2$ intersecting $\Sigma$ transversally does so in at least $3$ points.
\end{prop}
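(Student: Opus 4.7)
The plan is to pin down the homology class of $\Sigma$, reduce the statement to a handful of homologically small cases via the algebraic intersection bound, and then eliminate each remaining case by combining the standard smoothing construction with a Donaldson-style lattice obstruction.

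First, read off $[\Sigma]$ from the construction in Theorem~\ref{twenty}. Each of the four concordance annuli sits in its own $\C P^2$ summand and has class $e_i$ when the corresponding $n$ is even and $3e_i$ when $n$ is odd, matching the self-intersection contributions $1$ and $9$. Summing gives $[\Sigma] = e_1 + 3e_2 + e_3 + 3e_4 \in H_2(\#^4 \C P^2)$, which is consistent with $[\Sigma]^2 = 1 + 9 + 1 + 9 = 20$.

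For any essential sphere $T$ with $[T] = \sum a_i e_i$ meeting $\Sigma$ transversely in $k$ points, the geometric intersection bounds the algebraic one:
$$k \ge |[T]\cdot[\Sigma]| = |a_1 + 3a_2 + a_3 + 3a_4|.$$
Thus $k \ge 3$ is automatic unless $|a_1 + 3a_2 + a_3 + 3a_4| \le 2$, leaving only the cases $k \in \{0,1,2\}$ under this constraint. In each such case, smoothing every intersection point orientation-compatibly produces a connected smoothly embedded surface $F$ of class $[T] + [\Sigma]$ and genus $\max(0,k-1)$: when $k=1$, $F$ is an embedded sphere (after possibly reorienting $T$ to make the unique crossing positive) of self-intersection at least $[T]^2 + 2 + 20 \ge 23$; when $k=2$, $F$ is an embedded torus of class $[T] + [\Sigma]$; and when $k=0$, $T$ itself lies inside $X := \#^4 \C P^2 \setminus \nu(\Sigma)$, whose second homology is the positive-definite rank-three lattice $[\Sigma]^\perp \subset \Z^4$ of determinant $20$.

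The main technical obstacle is then to derive a contradiction in each of the three residual cases. The natural tool is Donaldson's diagonalization applied to the closed, simply connected, negative-definite four-manifold $\overline{X} \cup_{\partial X} Y$, where $Y$ is the negative-definite plumbing bounding $\partial X$; this produces a lattice embedding of the intersection form of $\overline{X}$ into $-\Z^N$ which, combined with the constraint that $T$ (or the smoothed surface $F$) represents a sphere or torus of the appropriate self-intersection, restricts $[T]$ to a short finite list of candidate classes. Each candidate can then be eliminated by a direct check. The hardest case is $k = 0$: there no sign data from intersections with $\Sigma$ is available and the enumeration over small-norm vectors in $[\Sigma]^\perp \subset \Z^4$ (starting with $\pm(e_1 - e_3)$ and $\pm(e_2 - e_4)$ at norm $2$) is correspondingly longer, but still finite once one invokes the lattice-embedding constraint.
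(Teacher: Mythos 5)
Your reduction to the cases $|[T]\cdot[\Sigma]|\le 2$ (and the identification $[\Sigma]=(1,3,1,3)$, which is correct) is fine, but the step you defer as "the main technical obstacle" is the entire content of the proposition, and the tool you propose for it cannot work. Donaldson's diagonalization constrains the intersection form of a definite filling; it carries no information about which positive-square classes are represented by embedded spheres or tori. Concretely, in your $k=0$ case a candidate such as $[T]=e_1-e_3$ \emph{is} represented by an embedded sphere of square $2$ in $\#^4\C P^2$ (connect-sum a line in one summand with a reversed line in another), so no lattice-embedding argument can eliminate it; the whole point of the proposition is that such a sphere cannot be made disjoint from $\Sigma$, which is invisible to homology. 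Your $k=1$ case reduces to showing that $\#^4\C P^2$ contains no sphere of square $\ge 23$, which is precisely the open problem this paper conjectures and explicitly says existing invariants cannot settle. Your $k=2$ case asks you to obstruct a positive-square torus, for which there is no lattice obstruction at all (tori of large positive square abound). So the "direct check" is not a finite verification but a genuine gap.

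The missing idea is gauge-theoretic and uses that $[\Sigma]$ is characteristic in $H_2(\#^4\C P^2)$: blow up $19$ points of $\Sigma$ with copies of $\overline{\C P^2}$, choosing some of them at the intersection points with the putative sphere $\Sigma'$, so that $\Sigma$ becomes a $+1$-sphere whose class is still characteristic; blowing it down yields a simply connected spin four-manifold with $b_2^+=3$ and $b_2^-=19$, hence a homotopy K3 surface. By Morgan--Szab\'o this manifold has nonvanishing Seiberg--Witten invariant in the $\operatorname{Spin}^c$ structure with $c_1=0$, so it contains no homologically essential embedded sphere of nonnegative square. A short parity check (if $\Sigma'\cdot\Sigma'=1$ then $[\Sigma']=\pm e_i$, so $[\Sigma']\cdot[\Sigma]$ is odd and the geometric intersection number is $1$) gives $\Sigma'\cdot\Sigma'\ge|\Sigma'\cap\Sigma|$, so the proper transform of $\Sigma'$ survives into the homotopy K3 as an essential sphere of nonnegative square --- a contradiction. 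Without an appeal to some nonvanishing gauge-theoretic invariant of this kind, your outline cannot be completed.
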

\begin{proof}
$\Sigma \cdot \Sigma = 20$, and the homology class $[\Sigma]$ is characteristic in $H_2(\#^4 \C P^2)$. Blowing up (with copies of $\overline{\C P^2}$) $19$ points on $\Sigma$ reduces the self-intersection to one and keeps the homology class characteristic, and then blowing down the resulting $+1$ sphere results in a simply connected, spin four-manifold $X$ with $b_2^+ = 3$ and $b_2^- = 19$. This new $X$ is therefore homotopy equivalent to a K3 surface, so by Theorem~1.1 from~\cite{morgan1997homotopy} it has a nonzero Seiberg-Witten invariant in the $\operatorname{Spin}^c$ structure with vanishing $c_1$, and so any homologically essential embedded sphere has negative self-intersection. 

However, if $\Sigma'$ is homologically essential and intersects $\Sigma$ transversally in at most two points, then looking at possible homology classes of $\Sigma'$ shows that $\Sigma' \cdot \Sigma' \ge |\Sigma' \cap \Sigma|$, so if some of the blow-ups used to construct $X$ are done at these intersection points, the result will be a homologically essential sphere in $X$ with nonnegative self intersection, which cannot exist. 
\end{proof}

One could ask whether the homotopy K3 surface in this proof is diffeomorphic to the K3 surface. The following result suggests that it might be:
\begin{prop}\label{S2K3origin}
There exist a positive definite four-manifold $X$ with $b_2 = 4$ and a sphere $\Sigma$ embedded in $X$ with self-intersection $20$ such that blowing up $19$ points on the sphere and then blowing down results in the K3 surface.
\end{prop}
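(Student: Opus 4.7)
The plan is to construct $X$ by reversing the blow-up/blow-down procedure, starting from the actual K3 surface. Concretely, the idea is to locate inside $K3 \# \mathbb{C}P^2$ a configuration consisting of a $+1$-sphere $L$ together with $19$ disjoint embedded $(-1)$-spheres $E_1,\dots,E_{19}$, each meeting $L$ transversally in exactly one point. Blowing down the $E_i$ will then produce $X$ together with a distinguished $+20$-sphere that plays the role of $\Sigma$.

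First I would invoke Mandelbaum--Moishezon dissolving: the intersection form of $K3 \# \mathbb{C}P^2$ is the odd indefinite unimodular form of rank $23$ and signature $-15$, so there is a smooth diffeomorphism
\[
K3 \# \mathbb{C}P^2 \cong \#^4 \mathbb{C}P^2 \# \#^{19} \overline{\mathbb{C}P^2}.
\]
Working in the dissolved model with standard generators $h_1,\dots,h_4$ and $e_1,\dots,e_{19}$, I would take $E_i := e_i$ and look for $L$ in a class such as $4h_1 + 2h_2 - e_1 - \cdots - e_{19}$ (or any other integer solution of $L^2 = 1$ with $L\cdot e_i = 1$), and realize this class by a smoothly embedded sphere meeting each $E_i$ once.

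Once the configuration has been exhibited, blowing down $E_1,\dots,E_{19}$ produces a smooth simply connected four-manifold $X$. Its intersection form is the orthogonal complement of $\mathrm{span}([E_1],\dots,[E_{19}])$, which is a unimodular (since $\langle -1\rangle^{\oplus 19}$ is unimodular and splits off as a direct summand), positive definite (by a signature count: $-15 - (-19) = 4$), odd (since the ambient form is odd) form of rank $4$, hence $\langle 1\rangle^{\oplus 4}$. So $X$ is positive definite with $b_2 = 4$ (in fact homeomorphic to $\#^4 \mathbb{C}P^2$), and the image of $L$ after the $19$ blowdowns is an embedded sphere $\Sigma$ of self-intersection $1 + \sum_{i=1}^{19} (L \cdot E_i)^2 = 20$. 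By construction, blowing up $X$ at the $19$ points on $\Sigma$ corresponding to the original blowdowns recovers $K3 \# \mathbb{C}P^2$, and then blowing down the $+1$-sphere (the proper transform of $\Sigma$) recovers $K3$.

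The hard part is the explicit realization of $L$: it is a sphere-representability problem in a standard simply connected four-manifold. The representability is not obstructed in principle—the ambient has $b_2^+ \ge 2$ and vanishing Seiberg--Witten invariants (it admits a positive scalar curvature metric), so there is no adjunction obstruction, and in the simply connected smooth setting Whitney moves can be used to adjust geometric intersection numbers to match algebraic ones. The cleanest path is likely to realize $L$ as the proper transform of a carefully chosen immersed algebraic curve on a complex surface diffeomorphic to (an open subset of) the ambient manifold, with nodes placed at the $19$ blowup points so that the proper transform is smooth and rational, and then to use handle slides/Whitney disks to put it in general position relative to the $E_i$.
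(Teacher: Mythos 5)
Your overall strategy---working backwards from the K3 surface---matches the paper's, but the implementation breaks down at its final and most important step. After passing to the dissolved model $\#^4 \C P^2 \# \#^{19}\overline{\C P^2}$, you pick $L$ to be an embedded sphere of square one meeting each $e_i$ once, blow down the $e_i$, and then assert that blowing down the proper transform of $\Sigma$ (i.e.\ $L$) ``recovers K3.'' Nothing guarantees this: blowing down an arbitrary $(+1)$-sphere in a manifold diffeomorphic to $\operatorname{K3} \# \C P^2$ only produces a manifold $N$ with $N \# \C P^2 \cong \operatorname{K3} \# \C P^2$, and connected sums with $\C P^2$ are not known to cancel, so $N$ need not be the K3 surface. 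The entire content of the proposition is to arrange that the $(+1)$-sphere blown down at the end is the honest complex line of the $\C P^2$ summand; once you replace $\operatorname{K3} \# \C P^2$ by its dissolved model via Mandelbaum--Moishezon, that identification is lost. (Tellingly, if your argument worked with the characteristic class $(1,1,3,3;-1,\dots,-1)$ obtained by blowing up the square-$20$ sphere of Theorem~\ref{twenty}, it would prove that the homotopy K3 constructed earlier in the paper is standard, which is exactly the question the paper leaves open.) Your specific suggested class $4h_1+2h_2-e_1-\cdots-e_{19}$ fails even homologically: it is not characteristic (pair it with $h_1$), so the orthogonal complement is an odd form and blowing down a sphere in that class could not yield a manifold even homeomorphic to K3.

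There is a second gap in the realization of $L$: vanishing Seiberg--Witten invariants and the absence of known obstructions do not produce an embedded sphere in a prescribed class, and the sketch via immersed algebraic curves plus Whitney moves is a plausibility argument, not a construction (Whitney moves can trade excess intersections with the $E_i$, but they do not make $L$ embedded in the first place). The paper sidesteps both issues by staying inside $\operatorname{K3} \# \C P^2$ with its standard line: it takes the Finashin--Mikhalkin configuration of $25$ spheres of square $-2$ in K3, whose intersection graph is the subdivided Petersen graph, extracts an induced path of $19$ of them, and slides these over the complex line to obtain $19$ disjoint $(-1)$-spheres each meeting the line once; blowing those down gives $X$ and $\Sigma$, and blowing down the line manifestly returns K3. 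If you want to salvage your route, you must either exhibit an ambient diffeomorphism carrying your $L$ to the standard line or, as the paper does, never give up the standard line at all.
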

\begin{proof}
It will be simpler to work backwards, beginning with $\operatorname{K3} \# \C P^2$ and finding $19$ spheres of self-intersection $-1$ that each meet a complex line of $\C P^2$ in one point. In \cite{finashin199786}, Finashin and Mikhalkin construct a particular configuration of $25$ spheres of self-intersection $-2$, whose intersection graph is a copy of the Petersen graph with an extra vertex inserted into each edge. This graph has an induced path of length $19$, so $19$ of these two-spheres are arranged in a path such that each one intersects only it's neighbors transversally once. Sliding these spheres over the complex line in $\C P^2$ produces the desired spheres of self-intersection $-1$. 
\end{proof}
Whether this $X$ is diffeomorphic to $\#^4 \C P^2$, and whether this $\Sigma$ is the sphere described earlier, seem difficult to answer. 

Since the complement of the sphere of Proposition~\ref{S2K3origin} embeds in the K3 surface, a symplectic four-manifold, results of Hom and Lidman \cite{hom2018note} (originally applied to for closed manifolds, but also applicable here) imply that, for this $X$ and $\Sigma$, the complement $X - \Sigma$ requires one-handles in any handle decomposition:
\begin{proof}
Since $X - \Sigma$ embeds in a closed symplectic four-manfold with $b_2^+ > 1$ (the K3 surface), Proposition~2.7 in \cite{hom2018note} implies that if $K$ is any knot such that the trace of $n$-surgery on $K$ embeds in $X - \Sigma$, then $V_0(K) > 0$. However, using the fact that $X$ is positive definite and adapting the proof of Theorem~1.1 from \cite{hom2018note}, we will be able to show that if $X - \Sigma$ had a handle decomposition with no one-handles, then there would be a knot $K$ whose trace embeds in $X - \Sigma$ with $V_0(K) = 0$. 

Any handle decompositiong of $X - \Sigma$ with only two- and three-handles gives a generating set for $H_2(X-\Sigma)$ coming from capped-off Seifert surfaces of the knots representing the two-handles, and introducing a cancelling two- and three- handle pair followed by a series of handleslides can include any class $\alpha \in H_2(X-\Sigma)$ in this generating set. This means that, for any such $\alpha$, there is an embedding $W_n(K) \into X - \Sigma$ of the $n$-trace of some knot $K \subset S^3$ taking a generator of the homology of $W_n(K)$ to $\alpha$. In particular, with respect to an orthonormal basis of $H_2(X)$ in which $\Sigma = (1,1,3,3)$, we can take $ \alpha = (1,-1,0,0)$. Working in the same basis, let $\mathfrak{s}$ be a $\operatorname{Spin}^c$ structure on $X$ with $c_1(\mathfrak{s}) = (1,-1,1,1)$. By Proposition~1.6 from \cite{ni2015cosmetic} $d(S^3_2(K), \mathfrak{s}|_{S^3_2(K)}) = d(L(2,1),0) - 2V_0(K)$. However, since the complement $X - W_2(K)$ is positive-definite, we also have the bound $d(S^3_2(K), \mathfrak{s}|_{S^3_2(K)}) \ge d(L(2,1),0)$, which forces $V_0(K) = 0$. 
\end{proof}
However, while the complement of $\Sigma$ has vanishing first homology, it is not clear that it is simply connected.

\bibliographystyle{amsplain}
\bibliography{CP2spheres.bib}

\end{document}